\documentclass[10pt, reqno]{amsart}
\usepackage{eurosym}
\usepackage{amsmath, amsthm, amscd, amsfonts, amssymb, graphicx, color}
\usepackage{xcolor}

\newtheorem{theorem}{Theorem}[section]
\newtheorem{lemma}[theorem]{Lemma}

\newtheorem{corollary}[theorem]{Corollary}
\theoremstyle{definition}

\newtheorem{remark}{Remark}
\numberwithin{equation}{section}
\usepackage{pdflscape}
\usepackage{longtable}
\usepackage{xcolor}
\usepackage{amsmath}
\usepackage[
colorlinks=true,
citecolor=red,
linkcolor=blue,
urlcolor=blue
]{hyperref}

\begin{document}
\title[Coefficient Difference]{On the Moduli Difference of Initial Logarithmic and Inverse Logarithmic Coefficients of Ma-Minda Convex Functions} 
\author[ R. Ali, M. Raza, U. Raza]{Rashid Ali$^{1}$, Mohsan Raza$^{1,\ast}$, Umar Raza$^{2}$}

\address{$^{1}$Department of Mathematics, Government College University Faisalabad, Pakistan.}
\email{rashidali5277@gmail.com}
\email{mohsan976@yahoo.com}

\address{$^{2}$Department of Mathematics, University of Jhang, Pakistan.}
\email{umarraza55@gmail.com}

\keywords{Univalent functions, Ma-Minda classes, coefficient differences, logarithmic coefficients, inverse logarithmic coefficients}

\date{Received \today.\\
	\indent$^{\ast}$ Corresponding author\\
	2020\textit{\ Mathematics Subject Classification. }30C45, 30C50.}

\begin{abstract}
    Let  $\mathcal{C}(\Phi)$ denote a subclass of class $
\mathcal{C}$ of convex functions in the unit disc $\mathbb{D}=\{z:|z|<1,
z\in \mathbb{C}\}$ defined by,

\begin{equation*}
{Re}\left \{1+ \frac{
zf^{\prime \prime}(z)}{f^{\prime}(z)}\right \} \prec \Phi(z).
\end{equation*}
where $\Phi$ represents the Ma--Minda function in $\mathbb{D}$. For $f\in\mathcal{C}(\Phi)$,
	we find the sharp estimates of $||\gamma_{2}|-|\gamma_{1}||$, where $\gamma_{1}$
	and $\gamma_{2}$ are the first two logarithmic coefficients of $f$. The bound
	is also obtained for the inverse logarithmic coefficients
	$\Gamma_{1}$ and $\Gamma_{2}$. We then show that, under suitable conditions,
	the bounds for $||\gamma_{2}|-|\gamma_{1}||$ and $||\Gamma_{2}|-|\Gamma_{1}||$
	are identical, which gives the invariance result between these functionals. We also present bounds for the differences for various classes of univalent functions by choosing particular function $\Phi$. Thus our results unify several results studied in the literature.
\end{abstract}

\maketitle


\setcounter{page}{1} 

\section{Introduction}

Let $\mathcal{A}$ be the set of functions holomorphic in the unit disk
$\mathbb{D}=\{z\in\mathbb{C}:|z|<1\}$ that satisfy $f(0)=0=f'(0)-1$. Every
$f\in\mathcal{A}$ therefore has the series form
\begin{equation}
f(z)=z+\sum_{n=2}^{\infty}a_{n}z^{n},\qquad z\in\mathbb{D}. \label{1.1}
\end{equation}
Within class $\mathcal{A}$, the univalent (one-to-one) functions form the class $\mathcal{S}$. For this class, de Branges \cite{c2} established in 1985 that $|a_n|\leq n,\qquad n\geq2$, which is known as the Bieberbach conjecture. Equality holds for the well-known Koebe function $k(z)=\frac{z}{(1-z)^2}$ and its rotations. Following this result, it is natural to compare consecutive coefficients, leading to the conjecture
\begin{equation*}
\bigl||a_{n+1}|-|a_n|\bigr|\leq1,\qquad n\geq2.
\end{equation*}
However, this conjecture is false for the class $\mathcal{S}$. Indeed, a counterexample already occurs for $n=2$. In fact, Duren \cite{c1} showed that
\begin{equation*}
-1\leq |a_3|-|a_2|
\leq \frac{3}{4}+e^{-\eta_0}\left(2e^{-\eta_0}-1\right)
=1.029\cdots,
\end{equation*}
where $\eta_0$ denotes the unique root of the equation $4\eta=e^\eta$ in the interval $(0,1)$.\\
Hayman \cite{c3} proved that the coefficient differences are uniformly
bounded in general, i.e.\ $\big||a_{n+1}|-|a_{n}|\big|\le C$ valid for all $f\in\mathcal{S}$ and $n\ge2$, where
$C$ is an absolute constant. In the original proof of the result, Hayman \cite{c5} employed a powerful method that he had developed for studying mean $p$-valent functions. Milin, later reached
the same conclusion by using the Lebedev-Milin inequalities. Duren's classic monograph \cite{c1} provides a thorough explanation of these developments. The value of $C$ has proved hard to lower. Later, Ilina \cite{lina}
showed that $C<4.26\cdots$ in 1968. Eight years later, the author refined Milin's argument
 to obtain
\begin{equation*}
-2.97\cdots<|a_{n+1}|-|a_{n}|<3.61\cdots ,
\end{equation*}
for $n\ge 2$, see \cite{c4}. No more advancements were reported for several years until Obradovi\'{c} et al.\ \cite{ob} brought in the
Grunsky inequalities and found $|a_{4}|-|a_{3}|\le 2.1033\cdots$. Accordingly,
beyond $n=2$ nobody knows the exact extreme values of $|a_{n+1}|-|a_{n}|$ for
the class $\mathcal{S}$.

A number of authors have investigated coefficient problems for various subclasses of the class $\mathcal{S}$. We begin by recalling some of the most important and well-known subclasses of $\mathcal{S}$. A function $f\in\mathcal{S}$ is said to be \emph{starlike} if $f(\mathbb{D})$ is a domain starlike with respect to the origin. The class of all such functions is denoted by $\mathcal{S}^{\ast}$. Analytically, $f\in\mathcal{S}^{\ast}$ if and only if
\begin{equation*}
\Re\left(\frac{zf'(z)}{f(z)}\right)>0,\qquad z\in\mathbb{D}.
\end{equation*}

Similarly, a function $f\in\mathcal{S}$ is called \emph{convex} if $f(\mathbb{D})$ is a convex domain. The class of all convex functions is denoted by $\mathcal{C}$ and is characterized analytically by
\begin{equation*}
\Re\left(1+\frac{zf''(z)}{f'(z)}\right)>0,\qquad z\in\mathbb{D}.
\end{equation*}

For starlike functions, Pommerenke \cite{c5} proposed in 1973 that
$\bigl||a_{n+1}|-|a_{n}|\bigr|\le 1$ for $n\ge2$; Leung \cite{leu} verified
this in 1978 and showed that the result is sharp for the function
\begin{equation*}
f(z)=\frac{z}{(1-\rho z)(1-\sigma z)},\qquad |\rho|=|\sigma|=1.
\end{equation*}
For the class of close-to-convex functions, Koepf \cite{koepf} obtained that $\bigl||a_{n+1}|-|a_{n}|\bigr|\le 1$ for $n=2$, whereas the corresponding result for $n\geq3$ remains unproved. For convex functions the
sharp upper and lower bounds for  $|a_{n+1}|-|a_{n}|$ are still challenging to find. The only significant findings to date are credited to  Ming and Sugawa \cite{Ming} who found the
best possible upper bounds for $n\ge2$ and the best possible lower bounds for $n=2,3$,
leaving the lower bound for $n\ge4$ as an open problem. Analogous questions about the
initial coefficients have been addressed by many authors for different classes:
Cho et al. \cite{cho} established bounds for $||a_3|-|a_2||$ for the class of Bazilevič functions. More recently, the corresponding bounds for the classes of non-Bazilevič and $\mathcal{U}(\alpha,\lambda)$ were obtained in \cite{ali,Raza} respectively.

Let $\mathcal{B}$ denote the class of Schwarz functions, i.e.\ $w:\mathbb{D}\to\mathbb{D}$
with $w(0)=0$. A function $f$ is subordinate to $g$ written mathematically as $f\prec g$ if there exists a function $w\in\mathcal{B}$ such that 
$f=g\circ w$. If $g$ is univalent and
$f(0)=g(0)$, then $f(\mathbb{D})\subset g(\mathbb{D})$.

Ma and Minda \cite{11} used subordination to establish a number of subclases of 
classical starlike and convex classes. They take a univalent $\Phi$ on
$\mathbb{D}$ whose image is symmetric about the real axis and starlike with
respect to $\Phi(0)=1$, with $\Phi'(0)>0$, and define $\mathcal{C}(\Phi)$ as
the class of functions $f\in\mathcal{A}$ for which
\begin{equation}\label{1.2}
1+\frac{zf''(z)}{f'(z)}\prec\Phi(z),\qquad z\in\mathbb{D}.
\end{equation}
Particular choices of $\Phi$ give familiar subclasses of $\mathcal{C}$,
including the following.

\begin{enumerate}
	\item[i.] $\mathcal{C}_{\alpha}:=\mathcal{C}\left(\left(\frac{1+z}{1-z}\right)^{\alpha}\right)$, see \cite{stan}.
	\item[ii.] $\mathcal{C}_{SG}:=\mathcal{C}(\frac{2}{1+e^{-z}})$, see \cite{goel,raza4}.
	\item[iii.] $\mathcal{C}_{q_{c}}:=\mathcal{C}(\sqrt{1+cz})$, $0<c\le1$, see \cite{sokol1}.
	\item[iv.] $\mathcal{BS}(\alpha):=\mathcal{C}(1+\frac{z}{1-\alpha z^{2}})$, $0\le\alpha\le1$, see \cite{kar}.
	\item[v.] $\mathcal{C}_{l}:=\mathcal{C}(\sqrt{1+z^{2}}+z)$, see \cite{raina,raza2}.
	\item[vi.] $\mathcal{C}_{c}:=\mathcal{C}\left(1+\frac{4z}{3}+\frac{2z^{2}}{3}\right)$, see \cite{raza3,sharma}.
	\item[vii.] $\mathcal{C}_{RL}:=\mathcal{C}\left(\sqrt{2}-(\sqrt{2}-1)\sqrt{\frac{1-z}{1+2(\sqrt{2}-1)z}}\right)$, see \cite{mendi}.
	\item[viii.] $\mathcal{C_L}:=\mathcal{C}(\sqrt{1+z})$, see \cite{sokol}.
	\item[ix.] $\mathcal{C}_{s}:=\mathcal{C}(1+\sin z)$, see \cite{cho1}.
	\item[x.] $\mathcal{C}_{e}:=\mathcal{C}(e^{z})$, see \cite{raza1,mend}.
	\item[xi.] $\mathcal{C}[A,B]:=\mathcal{C}\left(\frac{1+Az}{1+Bz}\right)$, $-1\le B<A\le1$, see \cite{janowski}.
\end{enumerate}

Consider the function $\Phi(z)$ whose power series expansion is given by
\[
\Phi(z)=1+D_{1}z+D_{2}z^{2}+D_{3}z^{3}+\cdots.
\] 
The well-known Koebe's one quarter theorem ensures that inverse of each $f\in\mathcal{S}$ is 
defined at least in $|z|<1/4$, with series expansion given as
\begin{equation}
f^{-1}(w)=z+\sum_{n=2}^{\infty }A_{n}w^{n}.  \label{1.3}
\end{equation}
Substituting (\ref{1.3}) into $f(f^{-1}(w))=w$ and using (\ref{1.1}) gives
\begin{equation}
A_{2}=-a_{2},\ \ \ A_{3}=2a_{2}^{2}-a_{3}.  \label{1.4}
\end{equation}
Libera and Zlotkiewicz \cite{lib1} proved that for convex function $f$, the inverse
coefficients satisfy the classical estimate $|A_{n}|\le1$ for $2\le n\le7$.
A phenomenon of this kind, in which a bound associated with the function $f$ and the corresponding bound for $f^{-1}$ are same, is referred to as invariance. Invariance of bounds for certain coefficient functionals was established for strongly convex functions by Thomas and Verma \cite{DK}, and a similar property was obtained for convex functions subordinate to the sigmoid function in \cite{raza4}. Recently, Thomas provided a comprehensive analysis of this characteristic for various coefficient functionals arising in the class $\mathcal{C}$ and its subclasses, (see \cite{raza3}).

Bounds for coefficients of inverse functions have
also been studied widely (see \cite{m30,m37}). Particularly, Sim and Thomas \cite{m30}
showed that $-1\le|A_{3}|-|A_{2}|\le3$ for the class $\mathcal{S}$ and also obtained sharp
estimates on several other classes.

The logarithmic coefficients $\gamma_{n}$ for $f\in\mathcal{S}$ are defined as
\begin{equation}
\frac{F_{f}(z)}{2}=\frac{1}{2}\log \frac{f(z)}{z}=\sum_{n=1}^{\infty }\gamma_{n}z^{n},\ \ \ z\in \mathbb{D},  \label{equ24}
\end{equation}
and comparison with (\ref{1.1}) gives
\begin{equation}
\gamma _{1}=\frac{a_{2}}{2},\ \ \ \gamma _{2}=\frac{1}{4}\left( 2a_{3}-a_{2}^{2}\right) .  \label{1.6}
\end{equation}
In function theory, logarithmic coefficients \(\gamma_n\) are of particular importance nd have been extensively studied. Milin \cite{m18} drew the link to the Bieberbach conjecture by
proposed that for $f\in\mathcal{S}$ and $n\ge2$,
\[
\sum_{m=1}^{n}\sum_{k=1}^{m}\left(k|\gamma_{k}|^2 - \frac{1}{k}\right) \leq 0,
\]
which de Branges proved and then used to settle the Bieberbach conjecture
\cite{c2}. For the Koebe function $k(z)=z/(1-z)^2$ one gets $\gamma_n=1/n$ but $|\gamma_n|\le 1/n$ is false in general for $f\in\mathcal{S}$, which explains the continuing interest of authors in logarithmic coefficient for the class $\mathcal{S}$ and its
subclasses (see \cite{m1,raza1,m36}).

Applying Loewner theory, Lecko and
Partyka \cite{m14} showed in 2023 that for $f\in \mathcal{S}$
\begin{equation*}
-\frac{\sqrt{2}}{2} \leq |\gamma_{2}| - |\gamma_{1}| \leq \frac{1}{2},
\end{equation*}
the right hand side inequality being sharp for $f(z)=z/(1+\tau^{2}z^{2})$ and the left
for $f(z)=z/(1-\sqrt{2}\tau z+\tau^{2}z^{2})$, where $|\tau|=1$. Furthermore, Obradovic \cite{nikola} studied bounds on  $|\gamma_{3}|-|\gamma_{2}|$ and
$|\gamma_{4}|-|\gamma_{3}|$ for the class $\mathcal{S}$. Similarly, Obradovic \cite{nikola25} presented sharp
bounds on $|\gamma_{2}|-|\gamma_{1}|$ for the class $\mathcal{B}_{1}(\alpha)$ of Bazilevic functions. Ali and Raza
\cite{ali} established the same results for the class of non-Bazilevi\v{c} functions and Kumar and Cho
\cite{kumarcho} provided sharp two-sided bounds on $|\gamma_{2}|-|\gamma_{1}|$ for
certain subclasses of $\mathcal{S}$.

The inverse logarithmic coefficients $\Gamma_{n}$ for $f\in\mathcal{S}$ is presented by Ponnusamy et al.\ \cite{ponusamy18} and given as
\begin{equation}
\frac{F_{f^{-1}}(z)}{2}=\frac{1}{2}\log \frac{f^{-1}(z)}{z}=\sum_{n=1}^{\infty }\Gamma _{n}z^{n},\ \ \ z\in \mathbb{D}.  \label{equ25}
\end{equation}
Using(\ref{1.4}), we get
\begin{equation}
\Gamma _{1}=-\frac{a_{2}}{2},\ \ \ \Gamma _{2}=\frac{1}{4}(-2a_{3}+3a_{2}^{2}).  \label{Gamma1Gamma2}
\end{equation}
Allu and Shaji \cite{allu} showed that
\begin{equation}
\Delta \leq |\Gamma _{2}|-|\Gamma _{1}|\leq \frac{1}{2}  \label{equdel}
\end{equation}
for $f\in\mathcal{S}$, where $\Delta=-\tfrac12$ if $a_{2}\le1$ and
$\Delta=-0.6353\ldots$ if $a_{2}>1$. Equality on the right is attained by
$f(z)=z/(1+z+z^{2})$, whether establishing the extremal function for left inequality is still open. The same
paper also established bounds $||\Gamma_{2}|-|\Gamma_{1}||$ for several subclasses. Recently, 
\cite{ali} gives sharp bounds on this difference for non-Bazilevi\v{c}
functions.

Invariance of bounds on moduli differences has been examined for coefficients
and for inverse coefficients over many subclasses of $\mathcal{A}$ but the
corresponding question for logarithmic and inverse logarithmic coefficients,
which are more intricate, has not been taken up. We fill this gap for the
Ma-Minda convex class $\mathcal{C}(\Phi)$. We establish sharp bounds for
$||\gamma_{2}|-|\gamma_{1}||$ and $||\Gamma_{2}|-|\Gamma_{1}||$ and identify when
they are invariant. We also give examples of both invariant and non-invariant
cases.

\section{Main Results}
Let $\mathcal{P}$ be the Carath\'eodory class of functions $h$ holomorphic in
$\mathbb{D}$ with $\Re (h(z))>0$ and
\begin{equation*}
p(z)=1+\sum_{n=1}^{\infty }p_{n}z^{n}.
\end{equation*}
The proof of our results relies on the following lemmas.
\begin{lemma}
\label{lemma1}\cite{19} Let $h\in \mathcal{P}$. Then 
\begin{equation*}
p_{1}=2t_{1},  
\end{equation*}%
and 
\begin{equation*}
p_{2}=2t_{1}^{2}+2(1-|t_{1}|^{2})t_{2}, 
\end{equation*}%
where $t_{i}\in \overline{\mathbb{D}}=\{z:|z|\le1,
z\in \mathbb{C}\}$ for $i\in \left\{ 1,2\right\} $. Now
if $|t_{1}|=1$, there is a unique function $h\in \mathcal{P}$ with $p_1$ given above is provided
as  
\begin{equation*}
h(z)=\frac{1+t_{1}z}{1-t_{1}z}.
\end{equation*}%
If $t_{1}\in \mathbb{D}$ and $|t_{2}|=1$, there is a unique function $h\in \mathcal{P}$ with $p_2$ as given above is provided
as  
\begin{equation}
h(z)=\frac{1+(\overline{t_{1}}t_{2}+t_{1})z+t_{2}z^{2}}{1+(\overline{t_{1}}%
t_{2}-t_{1})z-t_{2}z^{2}}.  \label{2.1}
\end{equation}
\end{lemma}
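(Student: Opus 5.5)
The plan is to go through the Herglotz representation of $h\in\mathcal{P}$ and a parametrization of the Schur (Schwarz) function obtained from it. For $h\in\mathcal{P}$, the function
\[
\omega(z)=\frac{h(z)-1}{h(z)+1}
\]
is holomorphic in $\mathbb{D}$ with $\omega(0)=0$ and $|\omega|<1$. Hence $\omega(z)=z\,\psi(z)$, where $\psi:\mathbb{D}\to\overline{\mathbb{D}}$. Put $t_1=\psi(0)=c_1$, so that $|t_1|\le1$.

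Next I apply the Schur algorithm. If $|t_1|<1$, the function
\[
\psi_1(z)=\frac{\psi(z)-t_1}{z\,(1-\overline{t_1}\psi(z))}
\]
is again a self-map of $\overline{\mathbb{D}}$ by the Schwarz–Pick lemma. Set $t_2=\psi_1(0)$. Expanding gives $\psi(z)=t_1+(1-|t_1|^2)t_2 z+\cdots$, so
\[
\omega(z)=t_1z+(1-|t_1|^2)t_2z^2+\cdots.
\]

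Now invert the relation: $h=(1+\omega)/(1-\omega)=1+2\omega+2\omega^2+\cdots$. Comparing coefficients yields $p_1=2t_1$ and $p_2=2(1-|t_1|^2)t_2+2t_1^2$. If $|t_1|=1$, then $p_2=2t_1^2$ holds regardless of the value chosen for $t_2$, which is why $t_2$ may be taken arbitrary in $\overline{\mathbb{D}}$.

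For uniqueness, consider first $|t_1|=1$. By the maximum principle $\psi\equiv t_1$, so $\omega=t_1z$ and $h=(1+t_1z)/(1-t_1z)$. Next suppose $|t_1|<1$ and $|t_2|=1$. Then $\psi_1\equiv t_2$, so
\[
\psi(z)=\frac{t_1+t_2z}{1+\overline{t_1}t_2z},\qquad
\omega(z)=\frac{z(t_1+t_2z)}{1+\overline{t_1}t_2z}.
\]
Substituting into $h=(1+\omega)/(1-\omega)$ gives numerator $1+(\overline{t_1}t_2+t_1)z+t_2z^2$ and denominator $1+(\overline{t_1}t_2-t_1)z-t_2z^2$, which is exactly (\ref{2.1}).

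The only delicate step is the uniqueness claim. It rests on the fact that the equality cases of Schwarz–Pick force $\psi$, respectively $\psi_1$, to be constant unimodular. That in turn uses the bijection between $\mathcal{P}$ and Schur functions, so that any $h$ realizing these $p_1,p_2$ with the boundary parameters must come from this $\omega$.
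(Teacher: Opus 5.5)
The paper does not prove this lemma; it quotes it from the literature, so there is no internal argument to compare against. Your Schur-algorithm proof is correct, and it is the standard route to this result. The individual steps all check out:
\begin{itemize}
\item with $\omega=(h-1)/(h+1)=z\psi$, $t_1=\psi(0)$ and $t_2=\psi_1(0)$, you get $\omega(z)=t_1z+(1-|t_1|^2)t_2z^2+\cdots$;
\item this yields $p_1=2t_1$ and $p_2=2t_1^2+2(1-|t_1|^2)t_2$;
\item $\psi_1\equiv t_2$ gives $\psi=(t_1+t_2z)/(1+\overline{t_1}t_2z)$, which reproduces \eqref{2.1}.
\end{itemize}

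A few points to tighten.
\begin{itemize}
\item \emph{Uniqueness when $|t_1|<1$.} State explicitly that the parameters are recovered from the coefficients: $t_1=p_1/2$ and $t_2=(2p_2-p_1^2)/(4-|p_1|^2)$. Hence any competitor $h\in\mathcal{P}$ with the same $p_1$ and $p_2$ has the same Schur parameters. Only then does $|\psi_1(0)|=|t_2|=1$ force $\psi_1\equiv t_2$.
\item \emph{Reading of the statement.} The same observation shows that ``with $p_2$ as given above'' must mean both $p_1$ and $p_2$ are prescribed. For example, $(1+z^2)/(1-z^2)$ (from $t_1=0$, $t_2=1$) and $(1+z+z^2)/(1-z^2)$ (from $t_1=\tfrac12$, $t_2=1$) both have $p_2=2$. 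Your phrase ``any $h$ realizing these $p_1,p_2$'' already reads it correctly.
\item \emph{Name of the rigidity step.} What forces $\psi$, respectively $\psi_1$, to be a unimodular constant is the maximum modulus principle (equality in Schwarz's lemma), not Schwarz--Pick.
\item \emph{Existence.} The lemma asserts there \emph{is} such a function, so say why the constructed $h$ lies in $\mathcal{P}$. In that case $\psi$ is a disc automorphism, so $|\omega(z)|<1$ and $\Re h>0$.
\end{itemize}
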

Our first result, develop a sharp bound on the functional  $||\gamma_{2}|-|\gamma_{1}||$.
\begin{theorem}\label{Theorem2.2}
If $f\in\mathcal{C}(\Phi)$ is of the form \eqref{1.1} and $D_1>0$, then 
\begin{equation*}
|\gamma_{2}|-|\gamma_{1}| \leq 
\begin{cases}
	\frac{\left\vert D_{1}^2+4D_{2}\right\vert-12D_{1} }{48}, & \text{if }\ |D_{1}^2+4D_{2}|\geq 16 D_{1}, \\ 
	\\
	\frac{D_{1}}{12}, & \text{if }\ |D_{1}^2+4D_{2}|< 16 D_{1},%
\end{cases}%
\end{equation*}%
\newline
\newline
and 
\begin{equation*}
|\gamma_{2}|-|\gamma_{1}| \geq 
\begin{cases}
	\frac{\left\vert D_{1}^2+4D_{2}\right\vert-12D_{1} }{48}, & \text{if }\ 2D_{1}\geq | D_{1}^{2}+4D_{2}|, \\[8pt]
	-\frac{D_{1}}{2}\sqrt{\frac{D_{1}}{4D_{1}+|D_{1}^2+4D_{2}|}}, & \text{if }\ 5D_{1}\leq | D_{1}^{2}+4 D_{2}|, \\%
	[8pt]
	-\frac{ D_{1}}{12}-\frac{ {3 D_{1}}^{2}}{4 {| D_{1}^{2}+4D_{2}|}+16 { D_{1}}}, & \text{otherwise}.%
\end{cases}%
\end{equation*}%
These bounds are sharp.
\end{theorem}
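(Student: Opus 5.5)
The plan is to reduce both $\gamma_1$ and $\gamma_2$ to the parameters $t_1,t_2$ of Lemma \ref{lemma1}. The whole problem then becomes a one-variable optimization in $r=|t_1|\in[0,1]$.

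\textbf{Step 1: express $\gamma_1,\gamma_2$ through $t_1,t_2$.} Since $f\in\mathcal{C}(\Phi)$, there is $w\in\mathcal{B}$ with $1+zf''(z)/f'(z)=\Phi(w(z))$. I write $w=(h-1)/(h+1)$ with $h\in\mathcal{P}$, so that
\[
w(z)=\tfrac{p_1}{2}z+\bigl(\tfrac{p_2}{2}-\tfrac{p_1^2}{4}\bigr)z^2+\cdots.
\]
Comparing coefficients with $1+2a_2z+(6a_3-4a_2^2)z^2+\cdots$ gives $a_2=D_1p_1/4$ and
\[
6a_3=4a_2^2+\tfrac{D_1p_2}{2}+\tfrac{(D_2-D_1)p_1^2}{4}.
\]
Inserting these into \eqref{1.6} and then substituting $p_1=2t_1$ and $p_2=2t_1^2+2(1-|t_1|^2)t_2$ from Lemma \ref{lemma1}, the $D_1t_1^2$ terms cancel. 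The result is
\[
\gamma_1=\frac{D_1t_1}{4},\qquad \gamma_2=\frac{(D_1^2+4D_2)t_1^2+4D_1(1-|t_1|^2)t_2}{48}.
\]
Put $B=|D_1^2+4D_2|$ and $r=|t_1|$. Letting $t_2$ range over $\overline{\mathbb{D}}$ with its argument chosen freely, $48|\gamma_2|$ fills the whole interval from $\max\{0,\,|Br^2-4D_1(1-r^2)|\}$ up to $Br^2+4D_1(1-r^2)$.

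\textbf{Step 2: upper bound.} Here
\[
|\gamma_2|-|\gamma_1|\le G(r):=\frac{(B-4D_1)r^2-12D_1r+4D_1}{48}.
\]
If $B\le 4D_1$, then $G$ is concave or linear with its vertex at $r\le 0$, so it is decreasing on $[0,1]$ and $\max G=G(0)=D_1/12$. If $B>4D_1$, then $G$ is convex, so its maximum is at an endpoint: $G(0)=D_1/12$ and $G(1)=(B-12D_1)/48$. These agree exactly when $B=16D_1$, which produces the two cases in the statement.

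\textbf{Step 3: lower bound.} Here
\[
|\gamma_2|-|\gamma_1|\ge g(r):=\frac{\max\{0,\,|(B+4D_1)r^2-4D_1|\}}{48}-\frac{D_1r}{4}.
\]
On $[0,r_0]$, where $r_0=\sqrt{4D_1/(B+4D_1)}$, $g$ is decreasing, so its minimum there is $g(r_0)=-\tfrac{D_1}{2}\sqrt{D_1/(B+4D_1)}$. On $[r_0,1]$,
\[
g(r)=\frac{(B+4D_1)r^2-12D_1r-4D_1}{48},
\]
a convex parabola with vertex $r^*=6D_1/(B+4D_1)$. One checks that $r^*\le 1$ iff $B\ge 2D_1$, and $r^*\ge r_0$ iff $B\le 5D_1$. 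This gives three regimes:
\begin{itemize}
\item If $B\ge 5D_1$, the minimum is at $r_0$.
\item If $2D_1\le B\le 5D_1$, the minimum is the vertex value $-\tfrac{D_1}{12}-\tfrac{3D_1^2}{4B+16D_1}$.
\item If $B<2D_1$, the minimum is at $r=1$, giving $(B-12D_1)/48$.
\end{itemize}
At the boundary values $B=2D_1$ and $B=5D_1$ the formulas from adjacent regimes coincide, which is consistent with the stated (overlapping) cases.

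\textbf{Step 4: sharpness.} For each extremum I take $t_1=r$ (the optimal value) and $t_2=\pm e^{i\theta}$ with $|t_2|=1$. The phase is chosen so that $t_2$ is aligned with the argument of $D_1^2+4D_2$ for the maximum, and anti-aligned for the minimum; for $r=r_0$ any $t_2$ with the cancelling phase works. I then define $h$ by \eqref{2.1}, or by $h=(1+z)/(1-z)$ when $r=1$, and set $f$ through $1+zf''/f'=\Phi((h-1)/(h+1))$.

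I expect the main obstacle to be the lower bound. The modulus of the difference makes $g$ only piecewise smooth, and the regime boundaries $2D_1$ and $5D_1$ must be matched carefully. One also has to verify that equality in $|\gamma_2|$ is attained by a single admissible pair $(t_1,t_2)$ while $D_1^2+4D_2$ may be complex.
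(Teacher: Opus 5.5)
Your argument is correct in substance and follows the paper's route. Both parametrize via Lemma~\ref{lemma1}, rotate so that $t_1\ge 0$, and bound $|\gamma_2|$ above by the triangle inequality, maximizing a quadratic in $|t_1|$ at an endpoint. For the lower bound both arrive at the same thresholds $2D_1$ and $5D_1$; these are exactly the paper's conditions $s\ge j+2v$ and $s^2\le 2v(j+2v)$. The one structural difference is that the paper keeps $|t_2|$ as a second variable in the reverse triangle inequality and minimizes over it at the end. You instead eliminate $t_2$ at once by describing the range of $48|\gamma_2|$, which gives a cleaner one-variable problem and avoids the paper's somewhat delicate two-variable case analysis.

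There is one slip to fix. For fixed $|t_1|=r$, the values of $48\gamma_2$ fill the closed disk with center $(D_1^2+4D_2)t_1^2$ and radius $4D_1(1-r^2)$. So the smallest modulus is $\max\{0,\,Br^2-4D_1(1-r^2)\}$, not $|Br^2-4D_1(1-r^2)|$; note that your $\max\{0,|\cdot|\}$ is just $|\cdot|$. As written, your inequality $|\gamma_2|-|\gamma_1|\ge g(r)$ is false for $r<r_0$. For example, $f(z)=z$ (that is, $t_1=t_2=0$) gives $|\gamma_2|-|\gamma_1|=0$, while your $g(0)=D_1/12$. With the correct expression, $g(r)=-D_1r/4$ on $[0,r_0]$. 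This is still decreasing and takes the same value $g(r_0)=-\tfrac{D_1}{2}\sqrt{D_1/(B+4D_1)}$, so the rest of Step 3, the three regimes, and your sharpness choices all go through unchanged.
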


\begin{proof}
	Since \(f\in\mathcal{C}(\Phi)\), therefore their exists a function $w \in \mathcal{B}$ satisfying
	\begin{equation}
	1+\frac{zf^{\prime\prime }(z)}{f^{\prime}(z)}=\Phi \left( w(z)\right) .  \label{2.2}
	\end{equation}%
	Let $h\in \mathcal{P}$. Then 
	\begin{equation}
	h(z)=\frac{1+w(z)}{1-w(z)}=1+p_{1}z+p_{2}z^{2}+p_{3}z^{3}+\cdots .  \label{2.3}
	\end{equation}%
    From (\ref{2.2}) and (\ref{2.3}), we write
    \begin{equation}
	1+\frac{zf^{\prime\prime }(z)}{f^{\prime}(z)}=\Phi \left( \frac{h(z)-1}{h(z)+1}\right) .  \label{2.2a}
	\end{equation}%
	Now, by comparing the coefficients of \eqref{2.2} and \eqref{2.3}, we have 
	\begin{equation}
	a_{2}=\frac{D_{1}}{4}p_{1},\quad \quad a_{3}:=\left( \frac{%
		-D_{1}+D_{1}^2+D_{2}}{24}\right) p_{1}^{2}+\frac{D_{1}}{12}p_{2}.  \label{2.4}
	\end{equation}%
	Therefore from \eqref{1.6}, we have
	\begin{equation}
		\gamma_{1}=\frac{D_{1}}{8}p_{1},\quad \quad \gamma_{2}:=\left( \frac{%
			-4D_{1}+D_{1}^2+4D_{2}}{192}\right) p_{1}^{2}+\frac{D_{1}}{24}p_{2}.  \label{2.5}
	\end{equation}%
	By using \eqref{2.5}, we get 
	\begin{equation*} 
	|\gamma_{2}|-|\gamma_{1}|=|{u}p_{1}^{2}+{v}p_{2}|-|{s}p_{1}|,
	\end{equation*}%
	with 
	\begin{equation*}
	s:=\frac{D_{1}}{8},\quad \quad u:=\left( \frac{-4D_{1}+D_{1}^2+4D_{2}}{192}%
	\right) ,\quad \text{and}\quad v:=\frac{D_{1}}{24}.
	\end{equation*}%
By using Lemma \ref{lemma1}, we get 
\begin{align*}
|\gamma_{2}|-|\gamma_{1}| & ={|4ut_{1}^{2}+v(2t_{1}^{2}+2(1-{|t_{1}|}^{2})t_{2})|}-2{|st_{1}|} \\
& \leq {j|t_{1}|}^{2}+2{|v|}\left( 1-{|t_{1}|}^{2}\right) |t_{2}|-2{|st_{1}|},
\end{align*}%
where $j:=|4u+2v|$.\\
By using the fact that the class $\mathcal{P}$ is rotationally invariant, we may assume that $t_{1}\in
\lbrack 0,1]$ and by using the fact that $|t_{2}|\leq 1$, we have 
\begin{align*}
|\gamma_{2}|-|\gamma_{1}| & \leq \left( {j}-2{v}\right) t_{1}^{2}-2{s}t_{1}+2{v}=\Psi (x),
\end{align*}%
where 
\begin{equation}
\Psi (x)=k_{2}x^{2}+k_{1}x+k_{0},  \label{2.6}
\end{equation}%
with 
\begin{equation*}
k_{2}={j}-2{v},\quad \quad k_{1}=-2{s},\quad \quad k_{0}=2{v}.
\end{equation*}%
Here we have two cases:\newline
$\mathbf{A}_{1}:$ If $k_{2}\leq 0$. Given that $k_{1}<0$, thus by (\ref{2.6}
) we get, $\Psi ^{\prime }(x)=2k_{2}x+k_{1}<0$ and it ensures that the function $
\Psi $ is decreasing. Therefore 
\begin{equation*}
\Psi (x)\leq \Psi (0)=k_{0}=2{v}=\frac{D_{1}}{12}.
\end{equation*}%
$\mathbf{A}_{2}:$ If \(k_{2}>0\), then the function \(\Psi\) is a quadratic and has leading coefficient positive. Hence, it is convex and attains its minimum value at the vertex, therefore 
\begin{equation*}
\Psi (x)\leq \max \{\Psi (0),\Psi (1)\}.
\end{equation*}%
Let $k_{2}+k_{1}<0$. Then $\Psi (1)=k_{2}+k_{1}+k_{0}<k_{0}=\Psi (0)$
and thus $\Psi (x)\leq \Psi (0)=2{v}$. Further, let $%
k_{2}+k_{1}\geq 0$. Then $\Psi (1)=k_{2}+k_{1}+k_{0}\geq k_{0}=\Psi
(0)$. Thus 
\begin{equation*}
\Psi (x)\leq \Psi (1)=k_{2}+k_{1}+k_{0}=|4{u}+2{v}|-2{s}=\frac{\left\vert D_{1}^2+4D_{2}\right\vert-12D_{1} }{48}.
\end{equation*}%
Thus, from above discussion, we have 
\begin{equation*}
	|\gamma_{2}|-|\gamma_{1}| \leq 
	\begin{cases}
		\frac{\left\vert D_{1}^2+4D_{2}\right\vert-12D_{1} }{48}, & \text{if }\ |D_{1}^2+4D_{2}|\geq 16 D_{1}, \\ 
		\\
		\frac{D_{1}}{12}, & \text{if }\ |D_{1}^2+4D_{2}|< 16 D_{1}.%
	\end{cases}%
\end{equation*}%
For $|D_{1}^2+4D_{2}|\geq 16 D_{1}$, the function $f$ given by \eqref{2.2a} provides a sharp result with $h(z)=(1+z)/(1-z)$.  Also, it is best possible for the function $f$ given by \eqref{2.2a} with $h(z)=(1+z^{2})/(1-z^{2})$ when $|D_{1}^2+4D_{2}|< 16 D_{1}$.\newline
Now, we consider the lower bound on $|\gamma_{2}|-|\gamma_{1}| $.\\
By using Lemma \ref{lemma1}, we get 
\begin{equation*}
|\gamma_{2}|-|\gamma_{1}| =\left\vert {j}e^{i\theta }t_{1}^{2}+2{v}e^{i\phi
_{1}}(1-|t_{1}|^{2})t_{2}\right\vert -2{s}t_{1},
\end{equation*}%
where 
\begin{equation*}
{j}=\left\vert 4{u}+2{v}\right\vert ,\quad \quad \theta =arg(4{u}+2{v}),\quad \quad
\phi _{1}=arg(2{v}).
\end{equation*}%
Since $\mathcal{P}$ is rotationally invariant, we may assume that \(t_{1}\in[0,1]\) and \(t_{2}=re^{i\phi_{2}}\), where \(r\in[0,1]\). Thus 
\begin{equation*}
|\gamma_{2}|-|\gamma_{1}| =\left\vert {j}e^{i\theta }t_{1}^{2}+2r{v}e^{i\phi
}(1-t_{1}^{2})\right\vert -2{s}t_{1},
\end{equation*}%
with $\phi =\phi _{1}+\phi _{2}$. As $\left\vert e^{i\phi }\right\vert =1$,
so 
\begin{equation*}
|\gamma_{2}|-|\gamma_{1}| =\left\vert {j}e^{i(\theta -\phi
)}t_{1}^{2}+2r{v}(1-t_{1}^{2})\right\vert -2{s}t_{1}.
\end{equation*}%
Now 
\begin{align}
|\gamma_{2}|-|\gamma_{1}| =& \sqrt{{j}^{2}t_{1}^{4}+4{j}{v}rt_{1}^{2}(1-t_{1}^{2})\cos (\theta -\phi
)+4{v}^{2}r^{2}(1-t_{1}^{2})^{2}}-2{s}t_{1},  \notag \\
\geq & \left\vert {j}t_{1}^{2}-2r{v}(1-t_{1}^{2})\right\vert -2{s}t_{1},
\label{2.7}
\end{align}%
as $\cos (\theta -\phi )\geq -1$. If ${u}={v}=0$, then 
\begin{equation*}
|\gamma_{2}|-|\gamma_{1}| \geq -2{s}t_{1}\geq -2{s}=-\frac{D_{1}}{4}.
\end{equation*}%
Now, let ${u}\neq 0,\ {v}\neq 0$. Then from \eqref{2.7}, we have two cases:%
\newline
\textbf{B}$_{1}$: If ${j}t_{1}^{2}-2r{v}(1-t_{1}^{2})\leq 0$, then 
\begin{equation*}
t_{1}\leq \sqrt{\frac{2r{v}}{{j}+2r{v}}}:=\zeta _{1}.
\end{equation*}%
So, from (\ref{2.7}) and by using the fact, $r\in \lbrack 0,1]$, we have 
\begin{align*}
|\gamma_{2}|-|\gamma_{1}| & \geq -({j}+2r{v})t_{1}^{2}-2{s}t_{1}+2r{v} \\
& \geq -({j}+2r{v})\zeta _{1}^{2}-2{s}\zeta _{1}+2r{v} \\
& =-2{s}\sqrt{\frac{2r{v}}{{j}+2r{v}}}=-2{s}\sqrt{\frac{r}{\frac{j}{2{v}}+r}}
\\
& \geq -2{s}\sqrt{\frac{1}{\frac{j}{2{v}}+1}} \\
& ={-2{s}}\sqrt{\frac{2{v}}{{j}+2{v}}}\\
& =-\frac{D_{1}}{2}\sqrt{\frac{D_{1}}{4D_{1}+|D_{1}^2+4D_{2}|}}.
\end{align*}%
\textbf{B}$_{2}$: If ${j}t_{1}^{2}-2r{v}(1-t_{1}^{2})>0$, then $t_{1}>\zeta
_{1}$, and we define 
\begin{equation*}
\varphi (t_{1},r)=({j}+2r{v})t_{1}^{2}-2{s}t_{1}-2r{v}.
\end{equation*}%
Since, for ${s}^{2}\leq 2{v}({j}+2{v})$ and by using the fact that $r\in
\lbrack 0,1]$, we have 
\begin{equation*}
\frac{\partial \varphi }{\partial t_{1}}=2({j}+2r{v})t_{1}-2{s}\geq 2\sqrt{%
2r{v}({j}+2r{v})}-2{s}\geq 0.
\end{equation*}%
It shows that the function $\varphi$ is increasing, and by using (\ref{2.7}), we have 
\begin{equation*}
|\gamma_{2}|-|\gamma_{1}| \geq -2{s}\sqrt{\frac{2r{v}}{{j}+2r{v}}}\geq -2{s}\sqrt{\frac{2{%
{v}}}{{j}+2{v}}}=-\frac{D_{1}}{2}\sqrt{\frac{D_{1}}{4 D_{1}+|D_{1}^2+4D_{2}|}},
\end{equation*}%
as $r\in \lbrack 0,1]$. Now, for ${s}\geq {j}+2{v}$, we deduce 
\begin{equation*}
\frac{\partial \varphi }{\partial t_{1}}=2({j}+2r{v})t_{1}-2{s}\leq 2{j}+4r{%
{v}}-2{s}\leq 2{j}+4{v}-2{s}\leq 0.
\end{equation*}%
It ensures that the function $\varphi$ is increasing, and by using (\ref{2.7}), we get 
\begin{equation*}
|\gamma_{2}|-|\gamma_{1}|\geq {j}-2{s}=\frac{\left\vert D_{1}^2+4D_{2}\right\vert-12 D_{1} }{48}.
\end{equation*}%
At the end, for ${s}^{2}>2{v}({j}+2{v})$ and ${s}<{j}+2{v}$, we have 
\begin{equation*}
\zeta _{1}<\frac{{s}}{{j}+2{v}}:=\zeta _{2}<1,
\end{equation*}%
and $\frac{\partial \varphi }{\partial r}=-2{v}(1-t_{1}^{2})\leq 0$ for $%
r\in \lbrack 0,1]$ and $t_{1}\in \lbrack \zeta _{1},1]$. Thus 
\begin{equation*}
\varphi (t_{1},r)\geq \varphi (t_{1},1)=({j}+2{v})t_{1}^{2}-2{s}t_{1}-2{{v}%
}=\varphi_1(t_{1}).
\end{equation*}%
Since $\varphi_1^{\prime }(t_{1})=0$ yields $t_{1}=\zeta _{2}$. Thus from (\ref{2.7}%
) 
\begin{equation*}
|\gamma_{2}|-|\gamma_{1}| \geq -2{{v}-\frac{{s}^{2}}{{j}+2{v}}}=-\frac{ D_{1}}{12}-\frac{ 3{ D_{1}}^{2}}{4 {| D_{1}^{2}+4D_{2}|}+16 { D_{1}}}.
\end{equation*}%
From above discussion, we have
\begin{equation*}
	|\gamma_{2}|-|\gamma_{1}| \geq 
	\begin{cases}
		\frac{\left\vert D_{1}^2+4D_{2}\right\vert-12D_{1} }{48}, & \text{if }\ 2D_{1}\geq | D_{1}^{2}+4D_{2}|, \\[8pt]
		-\frac{D_{1}}{2}\sqrt{\frac{D_{1}}{4D_{1}+|D_{1}^2+4D_{2}|}}, & \text{if }\ 5D_{1}\leq | D_{1}^{2}+4 D_{2}|, \\%
		[8pt]
		-\frac{ D_{1}}{12}-\frac{ {3 D_{1}}^{2}}{4 {| D_{1}^{2}+4D_{2}|}+16 { D_{1}}}, & \text{otherwise},%
	\end{cases}%
\end{equation*}%
For $ 2D_{1}\geq | D_{1}^{2}+4D_{2}|$, the result is best possible for $f$ given by \eqref{2.2a} with $
h(z)=(1+z)/(1-z)$.\newline
Let $5D_{1}\leq | D_{1}^{2}+4 D_{2}|$, then by using Lemma \ref{lemma1}, the result is sharp for $f$ given by \eqref{2.2a}, where $h(z)$ is provided in \eqref{2.1} with 
\begin{equation*}
t_{1}=\sqrt{\frac{{ 4D_{1}}}{{| D_{1}^{2}+4D_{2}|}+{4 D_{1}}}},\quad \text{and}\quad t_{2}=%
\begin{cases}
-\frac{ (D_{1}^{2}+4D_{2}) }{| D_{1}^{2}+4D_{2}|}, & \text{if }\  D_{1}^{2}+4D_{2}\neq 0, \\[1.2em]
1, & \text{if }\  D_{1}^{2}+4D_{2}=0.%
\end{cases}%
\end{equation*}%
For $ D_{1}^{2}+4 D_{2}=0$, the calculations are obvious and $|\gamma_{2}|-|\gamma_{1}|=-\frac{D_{1}}{4}$.\newline
Now for $D_{1}^{2}+4D_{2}\neq 0$, 
\begin{align*}
|\gamma_{2}|-|\gamma_{1}| &=\left\vert {u}p_{1}^{2}+{v}p_{{2}}|-|{s}p_{1}\right\vert\\
 &=\frac{1}{48}\left\vert \left(D_{1}^{2}+4D_{2}\right)t_{1}^{2}+4D_{1} (1-t_{1}^{2})t_{2}\right\vert-\frac{1}{4}D_{1}t_{1}
\end{align*}%
and after some simplifications, we get 
\begin{equation*}
|\gamma_{2}|-|\gamma_{1}|=-\frac{D_{1}}{4}\sqrt{\frac{4D_{1}}{4D_{1}+|D_{1}^2+4D_{2}|}}.
\end{equation*}%
Lastly, for $ 5D_{1}> | D_{1}^{2}+4 D_{2}|$ and $2D_{1}< | D_{1}^{2}+4D_{2}|$, the result is best possible
for the function $f$ given by \eqref{2.2a}, where $h(z)$ is provided in \eqref{2.1} with 
\begin{equation*}
t_{1}=\frac{6 { D_{1}}}{{| D_{1}^{2}+4D_{2}|}+4 { D_{1}}},\quad \text{and}\quad t_{2}=%
\begin{cases}
-\frac{(D_{1}^{2}+4D_{2}) }{| D_{1}^{2}+4D_{2}|}, & \text{if }\ D_{1}\neq 0, \\[1.2em]
1, & \text{if }\ D_{1}=0.%
\end{cases}%
\end{equation*}%
Now
\begin{align*}
2304|{u}p_{1}^{2}+{v}p_{2}|^{2}& =\left\vert \left(D_{1}^{2}+4D_{2}\right)t_{1}^{2}+4D_{1} (1-t_{1}^{2})t_{2}\right\vert ^{2}\\
& =\left|D_{1}^{2}+4D_{2}\right|^{2}t_{1}^{4}+2Re\left( 4D_{1}(\overline{D_{1}^{2}+4D_{2}})t_{1}^{2}(1-t_{1}^{2})t_{2}%
\right) +16D_{1}^{2}(1-t_{1}^{2})^{2} \\
& =\left|D_{1}^{2}+4D_{2}\right|^{2}t_{1}^{4}-2\left(4 D_{1} |D_{1}^{2}+4 D_{2}| t_{1}^2 (1-t_{1}^{2})\right)+16D_{1}^{2}(1-t_{1}^{2})^{2} \\
& =\left( \left|D_{1}^{2}+4D_{2}\right|t_{1}^{2}-4D_{1}(1-t_{1}^{2})\right) ^{2} \\
& =\left(\frac{36 { D_{1}}^{2}}{ {| D_{1}^{2}+4D_{2}|}+4 { D_{1}}}-4D_{1}\right) ^{2}.
\end{align*}%
Since $5D_{1}> | D_{1}^{2}+4 D_{2}|$, it implies $\frac{36 { D_{1}}^{2}}{ {| D_{1}^{2}+4D_{2}|}+4 { D_{1}}}-4D_{1}>0$ and
thus 
\begin{equation*}
|{u}p_{1}^{2}+{v}p_{2}|=\frac{ 3{ D_{1}}^{2}}{4 {| D_{1}^{2}+4D_{2}|}+16 { D_{1}}}-\frac{ D_{1}}{12}.
\end{equation*}%
Therefore 
\begin{equation*}
|\gamma_{2}|-|\gamma_{1}| =|{u}p_{1}^{2}+{v}p_{2}|-|{s}p_{1}|=-\frac{ D_{1}}{12}-\frac{ {3 D_{1}}^{2}}{4 {| D_{1}^{2}+4D_{2}|}+16 { D_{1}}}.
\end{equation*}%
It completes the proof.
\end{proof}
	From above theorem, by using different subordination functions $\Phi$, we get the following result.
\begin{corollary}\label{cor1}
 Let $f$ be of the form \eqref{1.1}. Then for different classes we have the following bounds on $||\gamma_{2}|-|\gamma_{1}||$
	\footnotesize
	\renewcommand{\arraystretch}{2}	
	\begin{longtable}{|p{1.5cm}|p{1cm}|p{1.5cm}|p{5cm}|p{1.5cm}|}
		\hline
			\textbf{Authors} &\textbf{Classes} & \textbf{Upper Bounds} & \textbf{Lower Bounds}& \textbf{References} \\
			\hline
			\endfirsthead
			
			\multicolumn{4}{c}{{\tablename\ \thetable{} -- continued from previous page}} \\
			\hline
			Authors&Classes & Upper bounds & Lower bounds&references  \\
			\hline
			\endhead
			
			\hline
			\multicolumn{4}{r}{{Continued on next page}} \\
			\endfoot
			
			\hline
			\endlastfoot
			&$\mathcal{C}_{\alpha}$ & $\dfrac{\alpha}{6}$ & $\begin{cases}
			-\dfrac{1}{4}\,\alpha\,(2-\alpha), & \alpha\in\left(0, \dfrac{1}{3}\right],\\[2ex]
			-\dfrac{\alpha}{12}\left(\dfrac{6\alpha+13}{3\alpha+2}\right), &  \alpha\in\left(\dfrac{1}{3}, \dfrac{5}{6}\right),\\[2ex]
			-\dfrac{\alpha}{\sqrt{4+6\alpha}}, & \alpha \in \left[\dfrac{5}{6}, 1\right].
			\end{cases}$ &\\
			\hline
		&	$\mathcal{C}_{SG}$ & $\dfrac{1}{24}$ & $-\dfrac{23}{192}$  &\\
		\hline
		&	$\mathcal{C}_{qc}$& $\dfrac{c}{24}$ & $\dfrac{c^{2}}{192}-\dfrac{c}{8}$ &\\
		\hline
		&	$\mathcal{BS}(\alpha)$ & $\dfrac{1}{12}$ & $-\dfrac{11}{48}$  &\\
			\hline
		Mandal et al.	&	$\mathcal{C}_{l}$ & $\dfrac{1}{12}$ & $-\dfrac{4}{21}$  & \cite{gmj}\\
		\hline
			&	$\mathcal{C}_{c}$ & $\dfrac{1}{9}$ & $-\dfrac{49}{198}$ &\\
		\hline
			&	$\mathcal{C}_{\mathcal{RL}}$ & $-\dfrac{\sqrt{2}}{8}+\dfrac{5}{24}$ & $-\dfrac{305}{192}+\dfrac{17 \sqrt{2}}{16}$ &\\
		\hline
				&	$\mathcal{C}_\mathcal{L}$ & $\dfrac{1}{24}$ & $-\dfrac{23}{192}$ & \\
		\hline
				&	$\mathcal{C}_{s}$ & $\dfrac{1}{12}$ & $-\dfrac{11}{48}$  & \\
		\hline
			
		&	$\mathcal{C}_{e}$ & $\dfrac{1}{12}$ & $-\dfrac{4}{21}$  & \\
			\hline
		\end{longtable}
\end{corollary}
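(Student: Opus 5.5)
Corollary~\ref{cor1} will follow by specializing Theorem~\ref{Theorem2.2} to each listed Ma--Minda function $\Phi$. For each class I will first expand $\Phi(z)=1+D_{1}z+D_{2}z^{2}+\cdots$ to second order and compute the two quantities that decide everything: $D_{1}$ and $M:=|D_{1}^{2}+4D_{2}|$. The upper bound is then $D_{1}/12$ if $M<16D_{1}$, and $(M-12D_{1})/48$ otherwise. For the lower bound I will compare $M$ with $2D_{1}$ and $5D_{1}$ and choose the matching branch. Sharpness is inherited from the extremal functions constructed in the theorem via \eqref{2.2a}.

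The routine expansions are as follows.
\begin{itemize}
\item $\mathcal{C}_{\alpha}$: $D_{1}=2\alpha$, $D_{2}=2\alpha^{2}$.
\item $\mathcal{C}_{SG}$: $D_{1}=1/2$, $D_{2}=0$.
\item $\mathcal{C}_{q_c}$: $D_{1}=c/2$, $D_{2}=-c^{2}/8$.
\item $\mathcal{BS}(\alpha)$: $D_{1}=1$, $D_{2}=0$.
\item $\mathcal{C}_{l}$: $D_{1}=1$, $D_{2}=1/2$.
\item $\mathcal{C}_{c}$: $D_{1}=4/3$, $D_{2}=2/3$.
\item $\mathcal{C}_{\mathcal{L}}$: $D_{1}=1/2$, $D_{2}=-1/8$.
\item $\mathcal{C}_{s}$: $D_{1}=1$, $D_{2}=0$.
\item $\mathcal{C}_{e}$: $D_{1}=1$, $D_{2}=1/2$.
\item $\mathcal{C}_{\mathcal{RL}}$: expand $\sqrt{(1-z)/(1+2(\sqrt2-1)z)}$ by the binomial series to get $D_{1},D_{2}$ in terms of $\sqrt2$.
\end{itemize}

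In every case I expect $M<16D_{1}$, so the upper bound is $D_{1}/12$. For example, $\mathcal{C}_{\alpha}$ gives $M=12\alpha^{2}<32\alpha$ and hence $\alpha/6$, and $\mathcal{C}_{c}$ gives $4/36=1/9$.

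For the lower bounds I place $M$ relative to $2D_{1}$ and $5D_{1}$.
\begin{itemize}
\item $\mathcal{C}_{SG}$: $M=1/4<1=2D_{1}$, so the first branch gives $(1/4-6)/48=-23/192$.
\item $\mathcal{C}_{\mathcal{L}}$ and $\mathcal{C}_{q_c}$: $D_{1}^{2}+4D_{2}=-c^{2}/4$, so $M=c^{2}/4<c=2D_{1}$, which gives $c^{2}/192-c/8$.
\item $\mathcal{BS}(\alpha)$ and $\mathcal{C}_{s}$: $M=1\in(0,2]$, so the first branch gives $(1-12)/48=-11/48$.
\item $\mathcal{C}_{l}$ and $\mathcal{C}_{e}$: $M=3\in(2,5)$, so the third branch gives $-1/12-3/28=-4/21$.
\item $\mathcal{C}_{c}$: $M=16/9+8/3=40/9$ and $D_{1}=4/3$, so $2D_{1}<M<5D_{1}$ and the third branch applies.
\item $\mathcal{C}_{\alpha}$: the condition $M=12\alpha^{2}\le 2D_{1}=4\alpha$ holds iff $\alpha\le1/3$, and $12\alpha^{2}\ge10\alpha$ holds iff $\alpha\ge5/6$. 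These thresholds give the three ranges in the table, and substituting into the three formulas yields the listed piecewise expressions.
\end{itemize}

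The main obstacle is bookkeeping rather than theory. First, the $\mathcal{C}_{\mathcal{RL}}$ expansion involves surds, and I must determine which branch the value of $M/D_{1}$ falls into. Second, several table entries should be rechecked against the theorem. For instance, for $\mathcal{C}_{\alpha}$ with $\alpha\le1/3$ the first branch gives $(12\alpha^{2}-24\alpha)/48=-\alpha(2-\alpha)/4$, which matches. For $\mathcal{C}_{c}$ the third branch gives
\[
-\tfrac19-\tfrac{3\cdot 16/9}{4\cdot 40/9+64/3}=-\tfrac19-\tfrac{48}{352},
\]
and I will simplify this carefully to confirm the entry $-49/198$. Should a discrepancy arise, I would trust the theorem's formula and correct the entry.
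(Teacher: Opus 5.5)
Your proposal is correct and follows the paper's own route: you specialize Theorem~\ref{Theorem2.2} by inserting each $\Phi$'s $D_1,D_2$, and every value you computed matches the table (in particular $-\tfrac19-\tfrac{48}{352}=-\tfrac{49}{198}$ for $\mathcal{C}_c$). For the one case you left open, $\mathcal{C}_{\mathcal{RL}}$ has $D_1=\tfrac{5-3\sqrt2}{2}$ and $D_2=\tfrac{71-51\sqrt2}{8}$, so $|D_1^2+4D_2|=\tfrac{132\sqrt2-185}{4}\approx 0.419<2D_1\approx 0.757$; the first branch then gives $\tfrac{204\sqrt2-305}{192}=-\tfrac{305}{192}+\tfrac{17\sqrt2}{16}$, as listed, and the upper bound is $D_1/12=\tfrac{5}{24}-\tfrac{\sqrt2}{8}$.
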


In following result, we establish the sharp bounds on the functional $||\Gamma_{2}|-|\Gamma_{1}||$.
\begin{theorem}\label{Theorem2.3}
	If $f\in\mathcal{C}(\Phi)$ is of the form \eqref{1.1} and $D_1>0$, then 
	\begin{equation*}
	|\Gamma_{2}|-|\Gamma_{1}| \leq 
	\begin{cases}
		\frac{{| 5D_{1}^{2}-4D_{2}|}-12{ D_{1}}}{48}, & \text{if }\ |5D_{1}^2-4D_{2}|\geq 16 D_{1}, \\ 
		\\
		\frac{D_{1}}{12}, & \text{if }\ |5D_{1}^2-4D_{2}|< 16 D_{1}.%
	\end{cases}%
	\end{equation*}%
	\newline
	\newline
	and 
	\begin{equation*}
	|\Gamma_{2}|-|\Gamma_{1}| \geq 
	\begin{cases}
		\frac{\left\vert 5D_{1}^2-4D_{2}\right\vert-12 D_{1} }{48}, & \text{if }\ 2 D_{1}\geq | 5D_{1}^{2}-4D_{2}|, \\[8pt]
		-\frac{D_{1}}{2}\sqrt{\frac{D_{1}}{4 D_{1}+|5D_{1}^2-4D_{2}|}}, & \text{if }\ 5 D_{1}\leq | 5D_{1}^{2}-4 D_{2}|, \\%
		[8pt]
		-\frac{ D_{1}}{12}-\frac{ {3 D_{1}}^{2}}{4 {| 5D_{1}^{2}-4D_{2}|}+16 { D_{1}}}, & \text{otherwise}.%
	\end{cases}%
	\end{equation*}%
	All of these bounds are sharp.
\end{theorem}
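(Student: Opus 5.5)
The proof follows the argument of Theorem \ref{Theorem2.2} almost verbatim. The only change is the coefficient of $p_1^2$ in the second functional. From \eqref{2.4} we have $a_2=\frac{D_1}{4}p_1$ and $a_3=\frac{-D_1+D_1^2+D_2}{24}p_1^2+\frac{D_1}{12}p_2$. Substituting these into \eqref{Gamma1Gamma2} gives $\Gamma_1=-\frac{D_1}{8}p_1$ and
\begin{equation*}
\Gamma_2=\frac{1}{4}\left(3a_2^2-2a_3\right)=\frac{5D_1^2+4D_1-4D_2}{192}\,p_1^2-\frac{D_1}{24}\,p_2 .
\end{equation*}
Hence
\begin{equation*}
|\Gamma_2|-|\Gamma_1|=|\tilde u p_1^2+\tilde v p_2|-|s p_1|,
\end{equation*}
with $s=\frac{D_1}{8}$, $\tilde v=-\frac{D_1}{24}$, $|\tilde v|=v=\frac{D_1}{24}$ and $\tilde u=\frac{5D_1^2+4D_1-4D_2}{192}$. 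Inserting Lemma \ref{lemma1} gives
\begin{equation*}
\tilde u p_1^2+\tilde v p_2=(4\tilde u+2\tilde v)t_1^2+2\tilde v(1-|t_1|^2)t_2 ,
\end{equation*}
and $4\tilde u+2\tilde v=\frac{5D_1^2-4D_2}{48}$. Therefore $\tilde j:=|4\tilde u+2\tilde v|=\frac{|5D_1^2-4D_2|}{48}$ takes the place of $j=\frac{|D_1^2+4D_2|}{48}$. The quantities $s$ and $|\tilde v|=v$ are the same as before.

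The earlier proof used only $j$, $|v|=v$ and $s$, together with the free rotations of $t_1$ and $t_2$. The sign change in $\tilde v$ is absorbed into the phase $\phi_1=\arg(2\tilde v)$ and therefore does not affect the argument. For the upper bound, I take $t_1\in[0,1]$ and $|t_2|\le1$. This gives the quadratic $\Psi(x)=(\tilde j-2v)x^2-2sx+2v$. The same case analysis then yields $\Psi\le 2v=D_1/12$ when $\tilde j-2v-2s<0$, which is equivalent to $|5D_1^2-4D_2|<16D_1$. Otherwise it yields $\Psi\le\Psi(1)=\tilde j-2s$.

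For the lower bound, I repeat the chain \eqref{2.7}, namely $|\Gamma_2|-|\Gamma_1|\ge|\tilde j t_1^2-2rv(1-t_1^2)|-2st_1$, and then the cases $\mathbf{B}_1$ and $\mathbf{B}_2$ with $\zeta_1=\sqrt{2v/(\tilde j+2v)}$ and $\zeta_2=s/(\tilde j+2v)$. The thresholds translate directly. The condition $s\ge\tilde j+2v$ becomes $2D_1\ge|5D_1^2-4D_2|$. The condition $s^2\le 2v(\tilde j+2v)$ becomes $5D_1\le|5D_1^2-4D_2|$. The middle regime gives $-2v-s^2/(\tilde j+2v)$, which is the stated third expression.

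For sharpness I use the same extremal functions \eqref{2.2a}. In the first upper-bound case and the first lower-bound case I take $h(z)=(1+z)/(1-z)$. In the second upper-bound case I take $h(z)=(1+z^2)/(1-z^2)$. In the two remaining lower-bound cases I take $h$ from \eqref{2.1} with $t_1$ equal to $\sqrt{4D_1/(|5D_1^2-4D_2|+4D_1)}$ or to $6D_1/(|5D_1^2-4D_2|+4D_1)$, respectively. The main obstacle is the choice of $t_2$. Since $\tilde v$ now has a negative sign, $t_2$ must be chosen so that $(5D_1^2-4D_2)t_1^2$ and $-4D_1(1-t_1^2)t_2$ point in opposite directions. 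I expect $t_2=(5D_1^2-4D_2)/|5D_1^2-4D_2|$, without the minus sign used in Theorem \ref{Theorem2.2}, and $t_2=1$ when $5D_1^2-4D_2=0$. With this choice the same computation of $2304|\tilde up_1^2+\tilde vp_2|^2$ as a perfect square verifies equality.
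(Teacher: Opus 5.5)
Your proposal is correct and is essentially the paper's proof. It uses the same reduction to $s=D_1/8$, $|\tilde v|=D_1/24$, $\tilde j=|5D_1^2-4D_2|/48$, the same quadratic $\Psi$ and case thresholds, and the same extremal data, including $t_2=(5D_1^2-4D_2)/|5D_1^2-4D_2|$ without the minus sign. You are in fact slightly more careful than the paper, which writes $2v=D_1/12$ even though $v=-D_1/24$.

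One caution when you ``repeat'' case $\mathbf{B}_2$: the inherited step $2\sqrt{2rv(\tilde j+2rv)}-2s\ge 0$ fails for small $r$ (e.g.\ $r=0$). It is safer to use $|\tilde j t_1^2-2rv(1-t_1^2)|\ge\max\{0,(\tilde j+2v)t_1^2-2v\}$, valid for all $r\in[0,1]$, and then minimize $\max\{0,(\tilde j+2v)t_1^2-2v\}-2st_1$ over $t_1\in[0,1]$, which gives exactly the three stated lower bounds.
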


\begin{proof}
From \eqref{Gamma1Gamma2} and \eqref{2.4}, we have 
	\begin{equation*}
	\Gamma_{1}=-\frac{D_{1}}{8}p_{1},\quad \quad \Gamma_{2}:=\left( \frac{%
	5D_{1}^2+4D_{1}-4D_{2}}{192}\right) p_{1}^{2}-\frac{D_{1}}{24}p_{2}. 
	\end{equation*}%
	Therefore
	\begin{equation*}
	|\Gamma_{2}|-|\Gamma_{1}|=|{u}p_{1}^{2}+{v}p_{2}|-|sp_{1}|,
	\end{equation*}%
	where
	\begin{equation*}
	{s}:=\frac{D_{1}}{8},\quad \quad {u}:=\left( \frac{5D_{1}^2+4D_{1}-4D_{2}}{192}%
	\right) ,\quad \text{and}\quad {v}:=-\frac{D_{1}}{24}.
	\end{equation*}%
	By utilizing Lemma \ref{lemma1}, we have 
	\begin{align*}
	|\Gamma_{2}|-|\Gamma_{1}| & ={|4{u}t_{1}^{2}+{v}(2t_{1}^{2}+2(1-{|t_{1}|}^{2})t_{2})|}-2{|{s}t_{1}|} \\
	& \leq {j}{|t_{1}|}^{2}+2{v}\left( 1-{|t_{1}|}^{2}\right) |t_{2}|-2{|{s}t_{1}|},
	\end{align*}%
	with ${j}:=|4{u}+2{v}|$.\\
	As we know, $\mathcal{P}$ is rotationally invariant, so we can take, $t_{1}\in
	\lbrack 0,1]$ and by using the fact that $|t_{2}|\leq 1$, we get 
	\begin{align*}
	|\Gamma_{2}|-|\Gamma_{1}| & \leq \left( {j}-2{v}\right) t_{1}^{2}-2{s}t_{1}+2{v} 
	 =\Psi (x),
	\end{align*}%
	with 
	\begin{equation}
	\Psi (x)=k_{2}x^{2}+k_{1}x+k_{0},  \label{2.8}
	\end{equation}%
	where 
	\begin{equation*}
	k_{2}={j}-2{v},\quad \quad k_{1}=-2{s},\quad \quad k_{0}=2{v}.
	\end{equation*}%
	Now, there are two cases:\newline
	$\mathbf{A}_{1}:$ Let $k_{2}\leq 0$. As $k_{1}<0$, so from (\ref{2.8}), $\Psi ^{\prime }(x)=2k_{2}x+k_{1}<0$ and thus we have that $%
	\Psi $ is decreasing. Therefore 
	\begin{equation*}
	\Psi (x)\leq \Psi (0)=k_{0}=2{v}=\frac{D_{1}}{12}.
	\end{equation*}%
	$\mathbf{A}_{2}:$ If \(k_{2}>0\), then the function \(\Psi\) is quadratic and has leading coefficient positive . Hence, it is convex and attains its minimum value at the vertex, thus 
	\begin{equation*}
	\Psi (x)\leq \max \{\Psi (0),\Psi (1)\}.
	\end{equation*}%
	Here, if $k_{2}+k_{1}<0$, then $\Psi (1)=k_{2}+k_{1}+k_{0}<k_{0}=\Psi (0)$
	and it yields $\Psi (x)\leq \Psi (0)=2{v}$. Also, if $%
	k_{2}+k_{1}\geq 0$, then $\Psi (1)=k_{2}+k_{1}+k_{0}\geq k_{0}=\Psi
	(0)$. Therefore
	\begin{equation*}
	\Psi (x)\leq \Psi (1)=k_{2}+k_{1}+k_{0}=|4{u}+2{v}|-2{s}=\frac{{| 5D_{1}^{2}-4D_{2}|}-12{ D_{1}}}{48}.
	\end{equation*}%
	Thus, from the above discussion, we have 
		\begin{equation*}
		|\Gamma_{2}|-|\Gamma_{1}| \leq 
		\begin{cases}
			\frac{{| 5D_{1}^{2}-4D_{2}|}-12{ D_{1}}}{48}, & \text{if }\ |5D_{1}^2-4D_{2}|\geq 16 D_{1}, \\ 
			\\
			\frac{D_{1}}{12}, & \text{if }\ |5D_{1}^2-4D_{2}|< 16 D_{1}.%
		\end{cases}%
	\end{equation*}%
	For $|5D_{1}^2-4D_{2}|\geq 16 D_{1}$, the result is best possible for $f$ given by \eqref{2.2a} with $h(z)=(1+z)/(1-z)$. Also for $|5D_{1}^2-4D_{2}|< 16 D_{1}$ the result is sharp and the function $f$ given by \eqref{2.2a} with $h(z)=(1+z^{2})/(1-z^{2})$ provides the equality in the result.\newline
	Now, for the lower bound on $|\Gamma_{2}|-|\Gamma_{1}| $ by using Lemma \ref{lemma1}, we have 
	\begin{equation*}
	|\Gamma_{2}|-|\Gamma_{1}| =\left\vert {j}e^{i\theta }t_{1}^{2}+2{v}e^{i\phi
		_{1}}(1-t_{1}^{2})t_{2}\right\vert -2{s}t_{1},
	\end{equation*}%
	with
	\begin{equation*}
	{j}=\left\vert 4{u}+2{v}\right\vert ,\quad \quad \theta =arg(4{u}+2{v}),\quad \quad
	\phi _{1}=arg(2{v}).
	\end{equation*}%
	Since $\mathcal{P}$ is rotationally invariant, we may assume that \(t_{1}\in[0,1]\) and \(t_{2}=re^{i\phi_{2}}\), where \(r\in[0,1]\). Therefore
	\begin{equation*}
	|\Gamma_{2}|-|\Gamma_{1}| =\left\vert {j}e^{i\theta }t_{1}^{2}+2r{v}e^{i\phi
	}(1-t_{1}^{2})\right\vert -2{s}t_{1},
	\end{equation*}%
	with $\phi =\phi _{1}+\phi _{2}$. As $\left\vert e^{i\phi }\right\vert =1$,
	so 
	\begin{equation*}
	|\Gamma_{2}|-|\Gamma_{1}| =\left\vert {j}e^{i(\theta -\phi
		)}t_{1}^{2}+2r{v}(1-t_{1}^{2})\right\vert -2{s}t_{1}.
	\end{equation*}%
	Thus
	\begin{align}
	|\Gamma_{2}|-|\Gamma_{1}| =& \sqrt{{j}^{2}t_{1}^{4}+4{j}{v}rt_{1}^{2}(1-t_{1}^{2})\cos (\theta -\phi
		)+4{v}^{2}r^{2}(1-t_{1}^{2})^{2}}-2{s}t_{1},  \notag \\
	\geq & \left\vert {j}t_{1}^{2}-2r{v}(1-t_{1}^{2})\right\vert -2{s}t_{1},
	\label{2.9}
	\end{align}%
	as $\cos (\theta -\phi )\geq -1$. Let ${u}={v}=0$. Then 
	\begin{equation*}
	|\Gamma_{2}|-|\Gamma_{1}| \geq -2{s}t_{1}\geq -2{s}=-\frac{D_{1}}{4}.
	\end{equation*}%
	On the other hand, let ${u}\neq 0,\ {v}\neq 0$. Then from \eqref{2.9}, there are two cases:%
	\newline
	\textbf{B}$_{1}$: Let ${j}t_{1}^{2}-2r{v}(1-t_{1}^{2})\leq 0$. Then 
	\begin{equation*}
	t_{1}\leq \sqrt{\frac{2r{v}}{{j}+2r{v}}}:=\zeta _{1}.
	\end{equation*}%
	By using (\ref{2.9}) and the fact that, $r\in \lbrack 0,1]$, we have
	\begin{align*}
	|\Gamma_{2}|-|\Gamma_{1}| & \geq -({j}+2r{v})t_{1}^{2}-2{s}t_{1}+2r{v} \\
	& \geq -({j}+2r{v})\zeta _{1}^{2}-2{s}\zeta _{1}+2r{v} \\
	& =-2{s}\sqrt{\frac{2r{v}}{{j}+2r{v}}}=-2{s}\sqrt{\frac{r}{\frac{j}{2{v}}+r}}
	\\
	& \geq -2{s}\sqrt{\frac{1}{\frac{j}{2{v}}+1}} \\
	& ={-2{s}}\sqrt{\frac{2{v}}{{j}+2{v}}}\\
	& =-\frac{D_{1}}{2}\sqrt{\frac{D_{1}}{4 D_{1}+|5D_{1}^2-4D_{2}|}}.
	\end{align*}%
	\textbf{B}$_{2}$: If ${j}t_{1}^{2}-2r{v}(1-t_{1}^{2})>0$, then $t_{1}>\zeta
	_{1}$, and we consider a function $\varphi$ defined by
	\begin{equation*}
	\varphi (t_{1},r)=({j}+2r{v})t_{1}^{2}-2{s}t_{1}-2r{v}.
	\end{equation*}%
	Since, for ${s}^{2}\leq 2{v}({j}+2{v})$ and by using the fact that $r\in
	\lbrack 0,1]$, we get 
	\begin{equation*}
	\frac{\partial \varphi }{\partial t_{1}}=2({j}+2r{v})t_{1}-2{s}\geq 2\sqrt{%
		2r{v}({j}+2r{v})}-2{s}\geq 0.
	\end{equation*}%
	Therefore the function $\varphi$ is increasing, thus by using (\ref{2.9}), we obtain
	\begin{equation*}
	|\Gamma_{2}|-|\Gamma_{1}| \geq -2{s}\sqrt{\frac{2r{v}}{{j}+2r{v}}}\geq -2{s}\sqrt{\frac{2{%
				{v}}}{{j}+2{v}}}=-\frac{D_{1}}{2}\sqrt{\frac{D_{1}}{4 D_{1}+|5D_{1}^2-4D_{2}|}},
	\end{equation*}%
	as $r\in \lbrack 0,1]$. Now, for  ${s}\geq {j}+2{v}$, we have 
	\begin{equation*}
	\frac{\partial \varphi }{\partial t_{1}}=2({j}+2r{v})t_{1}-2{s}\leq 2{j}+4r{%
		{v}}-2{s}\leq 2{j}+4{v}-2{s}\leq 0.
	\end{equation*}%
	The function $\varphi $ is decreasing and from (\ref{2.9}), we get 
	\begin{equation*}
	|\Gamma_{2}|-|\Gamma_{1}|\geq {j}-2{s}=\frac{\left\vert 5D_{1}^2-4D_{2}\right\vert-12D_{1} }{48}.
	\end{equation*}%
	Lastly, for ${s}^{2}>2{v}({j}+2{v})$ and ${s}<{j}+2{v}$, we get
	\begin{equation*}
	\zeta _{1}<\frac{{s}}{{j}+2{v}}:=\zeta _{2}<1,
	\end{equation*}%
	and $\frac{\partial \varphi }{\partial r}=-2{v}(1-t_{1}^{2})\leq 0$ for $%
	r\in \lbrack 0,1]$ and $t_{1}\in \lbrack \zeta _{1},1]$. Thus
	\begin{equation*}
	\varphi (t_{1},r)\geq \varphi (t_{1},1)=({j}+2{v})t_{1}^{2}-2{s}t_{1}-2{{v}%
	}=\varphi_2(t_{1}).
	\end{equation*}%
	Since $\varphi_2^{\prime }(t_{1})=0$ yields $t_{1}=\zeta _{2}$. Therefore by using (\ref{2.9}), we deduce
	\begin{equation*}
	|\Gamma_{2}|-|\Gamma_{1}| \geq -2{{v}-\frac{{s}^{2}}{{j}+2{v}}}=-\frac{ D_{1}}{12}-\frac{ 3{ D_{1}}^{2}}{4 {| 5D_{1}^{2}-4D_{2}|}+16 { D_{1}}}.
	\end{equation*}%
	Now, all the above discussion implies
	\begin{equation*}
		|\Gamma_{2}|-|\Gamma_{1}| \geq 
		\begin{cases}
			\frac{\left\vert 5D_{1}^2-4D_{2}\right\vert-12 D_{1} }{48}, & \text{if }\ 2 D_{1}\geq | 5D_{1}^{2}-4D_{2}|, \\[8pt]
			-\frac{D_{1}}{2}\sqrt{\frac{D_{1}}{4 D_{1}+|5D_{1}^2-4D_{2}|}}, & \text{if }\ 5 D_{1}\leq | 5D_{1}^{2}-4D_{2}|, \\%
			[8pt]
			-\frac{ D_{1}}{12}-\frac{ {3 D_{1}}^{2}}{4 {| 5D_{1}^{2}-4D_{2}|}+16 { D_{1}}}, & \text{otherwise},%
		\end{cases}%
	\end{equation*}%
	For $ 2 D_{1}\geq | 5D_{1}^{2}-4D_{2}|$, it is a sharp result provided by the function $f$ given by \eqref{2.2a} with $
	h(z)=(1+z)/(1-z)$.\newline
	Let $5D_{1}\leq |5 D_{1}^{2}-4 D_{2}|$, then Lemma \ref{lemma1} ensures that the result is best possible for the function $f$ given by \eqref{2.2a}, with $h(z)$ is provided by \eqref{2.1} where
	\begin{equation*}
	t_{1}=\sqrt{\frac{{4 D_{1}}}{{| 5D_{1}^{2}-4D_{2}|}+4 D_{1}}},\quad \text{and}\quad t_{2}=%
	\begin{cases}
	\frac{ 5D_{1}^{2}-4D_{2} } {| 5D_{1}^{2}-4D_{2}|}, & \text{if }\  5D_{1}^{2}\neq 4D_{2}, \\[1.2em]
	1, & \text{if }\  5D_{1}^{2}= 4D_{2}.%
	\end{cases}%
	\end{equation*}%
	Let $ 5D_{1}^{2}= 4D_{2}$, the calculations are obvious and  $|\Gamma_{2}|-|\Gamma_{1}|=-\frac{D_{1}}{4}$.\newline
	Now, for $5D_{1}^{2}\neq 4D_{2}$,
	\begin{align*}
	|\Gamma_{2}|-|\Gamma_{1}| &=\left\vert {u}p_{1}^{2}+{v}p_{{2}}|-|{s}p_{1}\right\vert\\
	&=\frac{1}{48}\left\vert \left(5D_{1}^{2}-4D_{2}\right)t_{1}^{2}-4 D_{1} (1-t_{1}^{2})t_{2}\right\vert-\frac{1}{4} D_{1}t_{1}
	\end{align*}%
	and after some calculations, we deduce 
	\begin{equation*}
	|\Gamma_{2}|-|\Gamma_{1}|=-\frac{D_{1}}{2}\sqrt{\frac{D_{1}}{4D_{1}+|5D_{1}^2-4D_{2}|}}.
	\end{equation*}%
	Lastly, for $ 5D_{1}> | 5D_{1}^{2}-4 D_{2}|$ and $2D_{1}< | 5 D_{1}^{2}-4D_{2}|$, the result is best possible for the function $f$ defined by  \eqref{2.2a}, where $h(z)$ is given by \eqref{2.1} with 
	\begin{equation*}
	t_{1}=\frac{6 { D_{1}}}{ {| 5D_{1}^{2}-4D_{2}|}+4 { D_{1}}},\quad \text{and}\quad t_{2}=%
	\begin{cases}
	\frac{ 5D_{1}^{2}-4D_{2} }{| 5D_{1}^{2}-4D_{2}|}, & \text{if }\ D_{1}\neq 0, \\[1.2em]
	1, & \text{if }\ D_{1}=0.%
	\end{cases}%
	\end{equation*}%
	
	Now  
	\begin{align*}
	2304|{u}p_{1}^{2}+{v}p_{2}|^{2}& =\left\vert \left(5D_{1}^{2}-4D_{2}\right)t_{1}^{2}-4D_{1} (1-t_{1}^{2})t_{2}\right\vert ^{2}\\
	& =\left|5D_{1}^{2}-4D_{2}\right|^{2}t_{1}^{4}-2Re\left(4 D_{1}(\overline{5D_{1}^{2}-4D_{2}})t_{1}^{2}(1-t_{1}^{2})t_{2}%
	\right) +16D_{1}^{2}(1-t_{1}^{2})^{2} \\
	& =\left|5D_{1}^{2}-4D_{2}\right|^{2}t_{1}^{4}-2(4 D_{1}\left|5D_{1}^{2}-4D_{2} \right|t_{1}^{2}(1-t_{1}^{2})+16D_{1}^{2}(1-t_{1}^{2})^{2} \\
	& =\left( \left|5D_{1}^{2}-4D_{2}\right|t_{1}^{2}-4D_{1}(1-t_{1}^{2})\right) ^{2} \\
	& =\left(\frac{36{ D_{1}}^{2}}{ {| 5D_{1}^{2}-4D_{2}|}+4 { D_{1}}}-4D_{1}\right) ^{2}.
	\end{align*}%
	Since $5D_{1}> | 5D_{1}^{2}-4 D_{2}|$, it gives $\frac{36 { D_{1}}^{2}}{ {| 5D_{1}^{2}-4D_{2}|}+4 { D_{1}}}-4D_{1}>0$, therefore
	\begin{equation*}
	|{u}p_{1}^{2}+{v}p_{2}|=\frac{ 3{ D_{1}}^{2}}{4{| 5D_{1}^{2}-4D_{2}|}+16 { D_{1}}}-\frac{ D_{1}}{12}.
	\end{equation*}%
	It implies
	\begin{equation*}
	|\Gamma_{2}|-|\Gamma_{1}| =|{u}p_{1}^{2}+{v}p_{2}|-|{s}p_{1}|=-\frac{ D_{1}}{12}-\frac{ {3 D_{1}}^{2}}{4 {| 5D_{1}^{2}-4D_{2}|}+16 { D_{1}}}.
	\end{equation*}%
	It completes the proof.
\end{proof}
	From above theorem, by using different subordination functions $\Phi$, we get the following result.
\begin{corollary}\label{cor2}
	If $f\in\mathcal{C}(\Phi)$ is of the form \eqref{1.1}, then for different classes we have the following bounds on $||\Gamma_{2}|-|\Gamma_{1}||$
	\footnotesize
\renewcommand{\arraystretch}{2}
\begin{longtable}{|p{1.5cm}|p{1cm}|p{1.5cm}|p{5cm}|p{1.5cm}|}
	\hline
	\textbf{Authors} &\textbf{Classes} & \textbf{Upper Bounds} & \textbf{Lower Bounds}& \textbf{References} \\
	\hline
	\endfirsthead
	
	\multicolumn{4}{c}{{\tablename\ \thetable{} -- continued from previous page}} \\
	\hline
	Authors & Classes & Upper Bounds & Lower Bounds & References \\
	\hline
	\endhead
	
	\hline
	\multicolumn{4}{r}{{Continued on next page}} \\
	\endfoot
	
	\hline
	\endlastfoot
	Allu and Shaji	&$\mathcal{C}_{\alpha}$ & $\dfrac{\alpha}{6}$ & $\begin{cases}
	-\dfrac{1}{4}\,\alpha\,(2-\alpha), & \alpha\in\left(0, \dfrac{1}{3}\right],\\[2ex]
	-\dfrac{\alpha}{12}\left(\dfrac{6\alpha+13}{3\alpha+2}\right), &  \alpha\in\left(\dfrac{1}{3}, \dfrac{5}{6}\right),\\[2ex]
	-\dfrac{\alpha}{\sqrt{4+6\alpha}}, & \alpha \in \left[\dfrac{5}{6}, 1\right].
	\end{cases}$ & \cite{allu}\\
	\hline
	&$\mathcal{C}_{SG}$ & $\dfrac{1}{24}$ & $-\dfrac{31}{312}$ & \\
\hline
&	$\mathcal{C}_{qc}$& $\dfrac{c}{24}$ & $\left\{
\begin{array}{ll}
\dfrac{7 {| c|}^{2}}{192}-\dfrac{c}{8}, & c\in \left[0, \dfrac47\right] \\[3mm]
\dfrac{-7 c^{2}-26 c}{168 c+192}, & c\in \left(\dfrac47, 1\right] 
\end{array}
\right.$ & \\
\hline
&	$\mathcal{BS}(\alpha)$ & $\dfrac{1}{12}$ & $-\dfrac{1}{6}$  & \\
\hline
	Ahamed and Roy&$\mathcal{C}_{l}$ & $\dfrac{1}{12}$ & $-\dfrac{4}{21}$&  \cite{17600}   \\
\hline
&	$\mathcal{C}_{c}$ & $\dfrac{1}{9}$ & $-\dfrac{53}{234}$ & \\
\hline
&	$\mathcal{C}_{\mathcal{RL}}$ & $-\dfrac{\sqrt{2}}{8}+\dfrac{5}{24}$ & $\dfrac{-1239 \sqrt{2}+1771}{-2712+1728 \sqrt{2}}$ & \\
\hline
&	$\mathcal{C}_\mathcal{L}$ & $\dfrac{1}{24}$ & $-\dfrac{11}{120}$ &  \\
\hline
&	$\mathcal{C}_{s}$ & $\dfrac{1}{12}$ & $-\dfrac{1}{6}$ &  \\
\hline
&	$\mathcal{C}_{e}$ & $\dfrac{1}{12}$ & $-\dfrac{4}{21}$ &  \\
	\hline
\end{longtable}
\end{corollary}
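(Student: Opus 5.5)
The statement just above is Corollary \ref{cor2}. The plan is to read off each row of the table from Theorem \ref{Theorem2.3}. For each listed $\Phi$ we only need the first two Taylor coefficients $D_1,D_2$. With them we compute $Q:=|5D_1^2-4D_2|$, decide which regime of Theorem \ref{Theorem2.3} applies, and substitute. The regimes are: $Q\ge 16D_1$ or not for the upper bound, and $Q\le 2D_1$, $Q\ge 5D_1$, or $2D_1<Q<5D_1$ for the lower bound. Sharpness is inherited from the extremal functions built in Theorem \ref{Theorem2.3} via \eqref{2.2a}, so no new extremal construction is needed.

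The first step is the expansions.
\begin{itemize}
\item $\left(\frac{1+z}{1-z}\right)^{\alpha}$ gives $D_1=2\alpha$, $D_2=2\alpha^2$.
\item $\frac{2}{1+e^{-z}}$ gives $D_1=\frac12$, $D_2=0$.
\item $\sqrt{1+cz}$ gives $D_1=\frac c2$, $D_2=-\frac{c^2}{8}$.
\item $1+\frac{z}{1-\alpha z^2}$ gives $D_1=1$, $D_2=0$.
\item $\sqrt{1+z^2}+z$ gives $D_1=1$, $D_2=\frac12$.
\item $1+\frac{4z}{3}+\frac{2z^2}{3}$ gives $D_1=\frac43$, $D_2=\frac23$.
\item $\sqrt{1+z}$ gives $D_1=\frac12$, $D_2=-\frac18$.
\item $1+\sin z$ gives $D_1=1$, $D_2=0$.
\item $e^z$ gives $D_1=1$, $D_2=\frac12$.
\item For $\mathcal{C}_{RL}$, expand $\sqrt{(1-z)/(1+2(\sqrt2-1)z)}$ to second order to get $D_1$ and $D_2$ in terms of $\sqrt2$.
\end{itemize}

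Next we evaluate $Q$ in each case. For example, $\mathcal{C}_\alpha$ gives $Q=12\alpha^2$. Then $Q<16D_1=32\alpha$, so the upper bound is $D_1/12=\alpha/6$. On the lower side:
\begin{itemize}
\item $Q\le 2D_1$ iff $\alpha\le 1/3$, giving $(12\alpha^2-24\alpha)/48=-\alpha(2-\alpha)/4$;
\item $Q\ge 5D_1$ iff $\alpha\ge 5/6$, giving $-\alpha\sqrt{2\alpha/(8\alpha+12\alpha^2)}=-\alpha/\sqrt{4+6\alpha}$;
\item the middle regime gives $-\frac{\alpha}{6}-\frac{12\alpha^2}{48\alpha^2+32\alpha}$, which simplifies to the stated middle expression.
\end{itemize}
This reproduces the result of \cite{allu}.

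The other rows go the same way.
\begin{itemize}
\item $\mathcal{C}_{SG}$ and $\mathcal{C}_\mathcal{L}$: $Q=5/4$ and $Q=7/4$, which lie in the middle regime (between $1$ and $5/2$).
\item $\mathcal{BS}(\alpha)$ and $\mathcal{C}_s$: $Q=5$, exactly the boundary $Q=5D_1$, giving $-\frac12\sqrt{1/9}=-1/6$.
\item $\mathcal{C}_l$ and $\mathcal{C}_e$: $Q=3$, middle regime, giving $-\frac1{12}-\frac{3}{28}=-\frac4{21}$.
\item $\mathcal{C}_c$: $Q=\frac{80}{9}-\frac83=\frac{56}{9}$ against $5D_1=\frac{20}{3}$, so the middle regime.
\item $\mathcal{C}_{q_c}$: $Q=\frac{7c^2}{4}$ against $2D_1=c$ and $5D_1=\frac{5c}{2}$. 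The first threshold is crossed at $c=4/7$, and the second never for $c\le1$. This produces the two-piece formula.
\end{itemize}

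The main obstacle is purely computational bookkeeping. The hardest case is $\mathcal{C}_{RL}$, whose coefficients involve nested radicals in $\sqrt2$. There one must carefully locate $Q$ relative to $2D_1$, $5D_1$ and $16D_1$ numerically before simplifying the resulting surd expression into the tabulated form. I would first compute $D_1,D_2$ numerically to identify the regime, and then simplify exactly. Boundary cases such as $Q=5D_1$ should also be checked for consistency: at $Q=5D_1$ the formulas of the second and third lower-bound regimes agree, so the choice there is harmless.
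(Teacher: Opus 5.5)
Your proposal is correct and follows the paper's own argument: compute $D_1,D_2$ for each $\Phi$, locate $Q=|5D_1^2-4D_2|$ among the regimes of Theorem \ref{Theorem2.3}, and substitute, with sharpness inherited from that theorem's extremal functions. Your regime assignments and values all check out. For the $\mathcal{C}_{RL}$ row you left open, $D_1=\frac{5-3\sqrt2}{2}$ and $D_2=\frac{71-51\sqrt2}{8}$ give $Q=\frac{73-48\sqrt2}{4}\approx1.28$, which lies strictly between $2D_1\approx0.76$ and $5D_1\approx1.89$, so the middle formula applies and simplifies exactly to the tabulated value.
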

\begin{remark}
From Theorem \ref{Theorem2.2} and Theorem \ref{Theorem2.3}, it is seen that the upper bounds on $|\gamma_{2}|-|\gamma_{1}|$ and $|\Gamma_{2}|-|\Gamma_{1}|$ are the same when 16 $D_1>\max\{|D_{1}^2+4 D_{2}|, |5D_{1}^2-4 D_{2}|\}$.\\
It is also clear from Corollary \ref{cor1} and Corollary \ref{cor2} that the following functions hold this property and have same upper bounds of $||\gamma_{2}|-|\gamma_{1}||$ and $||\Gamma_{2}|-|\Gamma_{1}||$.

	$\Phi_{{\alpha}}(z) =\left(\frac{1+z}{1-z}\right)^{\alpha}=1+2 \alpha  z+2 \alpha^{2} z^{2}+\cdots,$
	
	$\Phi_{{SG}}(z) =\frac{2}{1+e^{-z}}=1+\frac{1}{2} z -\frac{1}{24} z^{3}+\cdots,$
	
	$\Phi _{\mathcal{BS}}(z)=1+\frac{z}{1-\alpha z^{2}}=1+z+\alpha z^{3}+\cdots, $
	
	$\Phi _{l}(z)=z+\sqrt{1+z^{2}}=1+z+\frac{1}{2}z^{2}+\cdots ,$
	
	$\Phi _{c}(z)=1+\frac{4z}{3}+\frac{2z^{2}}{3},$
	
	$\Phi _{RL}(z)=\sqrt{2}-(\sqrt{2}-1)\sqrt{\frac{1-z}{1+2(\sqrt{2}-1)z}}=1+%
	\frac{5-3\sqrt{2}}{2}z+\frac{71-51\sqrt{2}}{8}z^{2}+\cdots ,$
	
	$\Phi _{L}(z)=\sqrt{1+z}=1+\frac{1}{2}z-\frac{1}{8}z^{2}+\cdots ,$
	
	$\Phi _{\sin }(z)=1+\sin (z)=1+z-\frac{1}{6}z^{3}+\cdots ,$
		
	$\Phi _{e}(z)=e^{z}=1+z+\frac{1}{2}z^{2}+\cdots.$

\end{remark}

\begin{remark}
	It can be observed from the Theorem \ref{Theorem2.2} and Theorem \ref{Theorem2.3} that the lower bounds on $|\gamma_{2}|-|\gamma_{1}|$ and $|\Gamma_{2}|-|\Gamma_{1}|$ are the same when 
	\[
	D_2 = \frac{D_1^2}{2}.
	\]
From Corollary \ref{cor1} and Corollary \ref{cor2}, we can see that the following functions hold this property and have same lower bounds of $||\gamma_{2}|-|\gamma_{1}||$ and $||\Gamma_{2}|-|\Gamma_{1}||$,\\
$\Phi_{{\alpha}}(z) =\left(\frac{1+z}{1-z}\right)^{\alpha}=1+2 \alpha  z+2 \alpha^{2} z^{2}+\cdots,$\\
$\Phi _{l}(z)=z+\sqrt{1+z^{2}}=1+z+\frac{1}{2}z^{2}+\cdots,$\\
$\Phi _{e}(z)=e^{z}=1+z+\frac{1}{2}z^{2}+\cdots .$\\		
\end{remark}

\end{document}